\documentclass[10pt,a4paper]{amsart}
\usepackage[T1]{fontenc}
\usepackage[utf8]{inputenc}
\usepackage[english]{babel}
\usepackage{amsmath, amssymb, amsthm}
\usepackage{cmap}
\theoremstyle{plain}
\newtheorem{theorem}{Theorem}
\newtheorem{lemma}{Lemma}
\theoremstyle{remark}
\newtheorem*{remark}{Remark}

\title{The power set of a quasinilpotent backward weighted shift}
\author{E.~Ignatev}
\address{Faculty of Mathematics and Mechanics, St.~Petersburg State University, St.~Petersburg, Russia}
\email{ignatevgeorge@gmail.com}
\subjclass[2020]{Primary 47A10; Secondary 47B37}
\keywords{Quasinilpotent operator, weighted shift, power set, resolvent growth, Baire category theorem}
\date{July 17, 2026}

\begin{document}

\begin{abstract}
For a quasinilpotent operator $T$ on a Banach space $X$, R.~Douglas and R.~Yang associated with each nonzero vector $x$ the local resolvent-growth exponent $k_x$, and introduced the power set $\Lambda(T) = \{k_x : x \neq 0\}$. We prove that $1 \in \Lambda(T)$ for every quasinilpotent operator on an arbitrary Banach space, which answers a question of Ji and Zhang. We further show that $\Lambda(T) = [0,1]$ for every backward unilateral weighted shift on $\ell^p$ whose weight sequence is strictly decreasing and $p'$-summable for some $p' > 0$, thereby weakening the hypotheses imposed by Hu and Ji.
\end{abstract}

\maketitle

\section{Introduction and notation}
Let $X$ be a Banach space and let $T \in \mathcal{L}(X)$ be a quasinilpotent operator, that is, $\sigma(T) = \{0\}$. Since $\sigma(T) = \{0\}$, the resolvent
\[
R(\lambda) := (\lambda - T)^{-1} \in \mathcal{L}(X)
\]
is defined for every $\lambda \in \mathbb{C} \setminus \{0\}$. For a nonzero $x \in X$, set
\[
k_x := \limsup_{z \to 0}\frac{\ln\|(z - T)^{-1}x\|}{\ln\|(z - T)^{-1}\|}, \qquad
\Lambda(T) := \{k_x : x \in X,\, x \neq 0\}.
\]
The set $\Lambda(T)$ was introduced by R.~Douglas and R.~Yang in~\cite{DouY21}; a standard verification shows that $\Lambda(T) \subseteq [0,1]$. A nonempty subset $X \subseteq \mathbb{R}$ is called \emph{right-closed} if $\sup Y \in X$ for every nonempty bounded subset $Y \subseteq X$.
In~\cite{Ji-Zhang}, qualitative properties of the set $\Lambda(T)$ are investigated for an arbitrary quasinilpotent operator; in particular, the following results are proved.
\begin{theorem}[\cite{Ji-Zhang}]
The set $\Lambda(T)$ is right-closed.
\end{theorem}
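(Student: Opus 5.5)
The plan is to realize $\sup Y$ directly as $k_x$ for a vector $x=\sum_n c_nx_n$ with rapidly decreasing coefficients. Write $M(z):=\|(z-T)^{-1}\|$. Since $\sigma(T)=\{0\}$ we have $M(z)\ge 1/|z|\to\infty$ as $z\to 0$, while $M$ is bounded on each set $\{|z|\ge\delta\}$. Let $s=\sup Y$; the supremum is finite because $\Lambda(T)\subseteq[0,1]$. If $s\in Y$ there is nothing to prove. Otherwise choose $x_n\neq 0$ with $k_n:=k_{x_n}$ strictly increasing to $s$.

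First I will record an elementary lemma: $k_{a+b}\le\max(k_a,k_b)$, and $k_{a+b}=k_a$ whenever $k_b<k_a$. The inequality holds because $\ln\|(z-T)^{-1}(a+b)\|\le \ln 2+\max_{v\in\{a,b\}}\ln\|(z-T)^{-1}v\|$ and $\ln M(z)\to\infty$. For the equality, write $a=(a+b)+(-b)$ and apply the inequality; this gives $k_a\le\max(k_{a+b},k_b)$, hence $k_{a+b}\ge k_a$. Consequently every partial sum $y_n=\sum_{j\le n}c_jx_j$ with all $c_j\neq0$ satisfies $k_{y_n}=k_n$.

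Next I choose $c_n>0$, radii $\delta_n\downarrow 0$ and points $z_n$ inductively. Each $\delta_n<\delta_{n-1}$ is chosen so that $\|(z-T)^{-1}x_j\|\le M(z)^{s+1/n}$ for all $j\le n$ and all $0<|z|<\delta_n$. This is possible since $k_j\le s$, so only finitely many conditions are involved. Then $z_n$ with $|z_n|<\delta_n$ is chosen so that $\|(z_n-T)^{-1}y_n\|\ge M(z_n)^{k_n-1/n}$, which uses $k_{y_n}=k_n$. The coefficient $c_{n+1}$ is taken so small that the following hold:
\begin{itemize}
\item $c_{n+1}\|x_{n+1}\|\,K_{n+1}\le 2^{-n-1}$, where $K_{n+1}:=\sup_{|z|\ge\delta_{n+1}}M(z)<\infty$;
\item $c_{n+1}\le 2^{-n-1}$;
\item for every $j\le n$, $c_{n+1}\|(z_j-T)^{-1}x_{n+1}\|\le 2^{-(n+2-j)}\|(z_j-T)^{-1}y_j\|$.
\end{itemize}
These are finitely many positive constraints. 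The first coefficient is fixed with $c_1\|x_1\|K_1\le 1/2$ and $c_1\le 1/2$. Set $x=\sum c_nx_n$, which converges absolutely.

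For the upper bound $k_x\le s$, take $z$ with $\delta_{N+1}\le|z|<\delta_N$. The head $\sum_{n\le N}c_n\|(z-T)^{-1}x_n\|$ is at most $M(z)^{s+1/N}$, because $|z|<\delta_N$ and $\sum c_n\le1$. Each tail term with $n>N$ has $|z|\ge\delta_{N+1}\ge\delta_n$, so it is at most $c_n\|x_n\|K_n\le 2^{-n}$, and the tail is at most $1$. As $z\to0$ we have $N\to\infty$ and $M(z)\to\infty$, which gives $k_x\le s$.

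For the lower bound, the third condition on the coefficients gives $\|(z_n-T)^{-1}(x-y_n)\|\le\tfrac12\|(z_n-T)^{-1}y_n\|$. Hence $\|(z_n-T)^{-1}x\|\ge \tfrac12 M(z_n)^{k_n-1/n}$ with $z_n\to0$, so $k_x\ge\lim k_n=s$.

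The main obstacle is exactly this possible cancellation between the terms of $x$ in the lower bound. It is what forces the lemma on strictly different exponents, since otherwise a partial sum could have a smaller exponent than its summands. It also forces the choice of $c_{n+1}$ after the point $z_n$ has been fixed; the upper bound, by contrast, is routine bookkeeping.
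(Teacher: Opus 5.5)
Your proof is correct and complete. The paper gives no proof of this statement: it is quoted from Ji--Zhang as background. The only general argument the paper itself contains is the Baire-category proof of Theorem~\ref{thm:one}, so there is no in-paper route to compare yours against.

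Your argument is an explicit diagonal construction and it works. The lemma $k_{a+b}\le\max(k_a,k_b)$, with equality when $k_b<k_a$, keeps $k_{y_n}=k_n$ for every partial sum. Choosing $c_{n+1}$ only after $z_n$ is fixed then controls cancellation. The bookkeeping checks out: $\sum_{n\ge j}2^{-(n+2-j)}=\tfrac12$ gives the lower bound at $z_j$, and the annuli $\delta_{N+1}\le|z|<\delta_N$ give the upper bound.

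A few points you use implicitly are worth stating in a write-up:
\begin{itemize}
\item Take $\delta_1<1$, so that $\ln M(z)>0$ and dividing by it preserves inequalities.
\item Force $\delta_n\to0$, e.g.\ $\delta_n<1/n$. Strict decrease alone does not make the annuli cover a punctured neighborhood of $0$.
\item The order of choices is $\delta_{n+1}$, then $c_{n+1}$, then $z_{n+1}$. This is consistent because the condition defining $\delta_{n+1}$ does not involve the coefficients.
\item $K_n\ge M(\delta_n)\ge 1/\delta_n>1$ gives $c_n\|x_n\|\le 2^{-n}$, hence absolute convergence of $\sum c_nx_n$.
\item Injectivity of $(z_j-T)^{-1}$ makes the right-hand sides of your third constraint strictly positive.
\item In the lemma, $a+b\neq0$ is automatic when $k_b<k_a$, since $k_{-a}=k_a$. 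So $k_{a+b}$ is always defined where you use it.
\end{itemize}
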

\begin{theorem}[\cite{Ji-Zhang}]
For every right-closed subset $\sigma \subseteq [0,1]$ containing $1$, there exists a quasinilpotent operator $T \in \mathcal{L}(H)$ such that $\Lambda(T) = \sigma$.
\end{theorem}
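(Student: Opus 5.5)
The plan is to realize $\sigma$ as the power set of an orthogonal direct sum $T=\bigoplus_n T_n$ on $H=\bigoplus_n H_n$. Each block $T_n$ will be "homogeneous", meaning every nonzero vector of $H_n$ has the same exponent, equal to a prescribed number $s_n\in\sigma$. All blocks will be calibrated against one common growth function $M(z)$. Then right-closedness of $\sigma$ should turn the direct-sum structure into exactly the set $\sigma$.

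\emph{Step 1: the building block.} I would look for a quasinilpotent $A$ on a Hilbert space and a function $M(z)\to\infty$ with $\ln M(z)/|\ln|z||\to\infty$ as $z\to0$, satisfying three conditions:
\[
\|(z-A)^{-1}\|\le M(z) \text{ for all } z\ne0,
\]
there is a sequence $z_j\to0$ with $\|(z_j-A)^{-1}\|\ge M(z_j)^{1-o(1)}$, and, crucially, $k_x=1$ for every $x\neq0$. Scaling then produces all other exponents. Indeed $(z-aA)^{-1}=a^{-1}(z/a-A)^{-1}$, so for a suitably regular $M$ (for instance $M(z)=\exp(1/|z|)$, which gives $M(z/a)=M(z)^{1/a}$) the operator $T_s:=s^{-1}A$ with $s\in(0,1]$ satisfies $\|(z-T_s)^{-1}\|\le C M(z)^{s}$, and every nonzero vector has growth $M^{s}$ along $z_j$. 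The exponent $0$, if $0\in\sigma$, is supplied by a zero summand: there $\|(z-0)^{-1}x\|=\|x\|/|z|$, which is negligible against $M$ on the logarithmic scale.

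\emph{Step 2: assembly.} Fix a countable dense subset $\{s_n\}\subseteq\sigma$ containing $1$, and also containing $0$ if $0\in\sigma$. Put $T=\bigoplus_n T_{s_n}$. Since $\|(z-T)^{-1}\|=\sup_n\|(z-T_{s_n})^{-1}\|\le CM(z)<\infty$, the operator $T$ is quasinilpotent. Because the summand $s=1$ is present, $\ln\|(z-T)^{-1}\|\sim\ln M(z)$ along $z_j$. Now take $x=(x_n)\neq0$.
\begin{itemize}
\item For the lower bound, $\|(z-T)^{-1}x\|\ge\|(z-T_{s_n})^{-1}x_n\|$ gives $k_x\ge s_n$ for each $n$ with $x_n\ne0$.
\item For the upper bound, $\|(z-T)^{-1}x\|\le C\sup\{M(z)^{s_n}:x_n\ne0\}\,\|x\|$ gives $k_x\le\sup\{s_n:x_n\neq0\}$.
\end{itemize}
Hence $k_x=\sup\{s_n:x_n\ne0\}$. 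This is the supremum of a nonempty subset of $\sigma$, so it lies in $\sigma$ by right-closedness, which gives $\Lambda(T)\subseteq\sigma$. Conversely, vectors supported in one block realize every $s_n$. To obtain the remaining points of $\sigma$, I would not rely on density alone. Instead I would include in the index set all of $\sigma$ when $\sigma$ is countable, or use vectors $x$ spread over infinitely many blocks whose $s_n$ increase to a given $s\in\sigma$. The second option gives $k_x=s$, provided every $s\in\sigma$ is a supremum of an increasing sequence from $\{s_n\}$, or is itself some $s_n$.

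This last point needs care. A point of $\sigma$ isolated from the left must itself be included among the $s_n$. Such points are countable, since each owns a disjoint interval to its left, so this is possible. Every other point of $\sigma$ is a left limit of points of $\sigma$, and hence a supremum of an increasing sequence from a suitably chosen countable subset. So I would choose $\{s_n\}$ to be the left-isolated points together with a countable set that accumulates from the left at every non-left-isolated point, for example a countable dense subset of $\sigma$.

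\emph{Main obstacle.} I expect the hard part to be Step 1, namely producing a block in which \emph{every} nonzero vector has exponent exactly $1$ with a uniform comparison function $M$. The Volterra operator fails: vectors supported in $[a,1]$ have exponent $1-a$. My first attempt would be a backward weighted shift with rapidly decreasing weights, tested on its finite-support and general vectors. Failing that, I would use a direct sum of finite-dimensional nilpotent Jordan-type blocks of growing size, scaled so that each block contributes maximal growth on its own scale. The vectors that concern me are those spread over many blocks, because their exponent could fall below $1$. There I would need the growth on each block to be attained along a common sequence $z_j$, with a lower bound that does not degrade with the block.
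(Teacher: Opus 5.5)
The paper only quotes this theorem from Ji--Zhang and gives no proof, so your argument has to stand on its own. Your assembly in Step~2 is sound, including the choice of $\{s_n\}$ (left-isolated points plus a countable dense subset) and the use of right-closedness. But Step~1 carries all of the content, you leave it open, and both candidates you name fail for an elementary reason.

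\textbf{Why the proposed blocks fail.} If $A^m x=0$, then $(z-A)^{-1}x=\sum_{j<m}A^jx/z^{j+1}$ is a polynomial in $1/z$, so $k_x=0$ whenever $A$ is not nilpotent. This is exactly how the paper shows $k_{e_m}=0$ for the backward shift. A backward weighted shift has $Ae_0=0$, and a direct sum of nilpotent Jordan blocks has nonzero kernel, so neither is homogeneous. A homogeneous block must satisfy $\ker A^m=\{0\}$ for every $m$.

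\textbf{Two further defects in Step~1.}
\begin{enumerate}
\item The scaling is inverted. For $M(z)=e^{1/|z|}$ one has $M(z/a)=M(z)^{a}$, so the block of exponent $s$ is $sA$, not $s^{-1}A$. With $s^{-1}A$, the direct sum is unbounded when $\inf s_n=0$, and the blocks grow like $M^{1/s}$, swamping the $s=1$ block.
\item The requirement ``$k_x=1$ for every $x\neq0$'' is weaker than what Step~2 actually uses. After rescaling, a vector of the $s$-block is measured against $\|(z-T)^{-1}\|\approx M(z)$. So you need $\ln\|(z-A)^{-1}x\|\ge(1-o(1))\ln M(z)$, along some sequence, for every $x\neq0$ and against the fixed $M$.
\end{enumerate}

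\textbf{A block that works.} Take the forward shift $Se_n=e_{n+1}/(n+1)$ on $\ell^2$, i.e.\ the integration operator on $H^2$.
\begin{itemize}
\item Here $S^ke_n=\frac{n!}{(n+k)!}e_{n+k}$, so $\|S^k\|=1/k!$. Hence $\|(z-sS)^{-1}\|\le|z|^{-1}e^{s/|z|}$, uniformly in $s\in[0,1]$.
\item Let $x\neq0$ and let $n_0$ be its first nonzero index. The $m$-th coordinate of $(z-S)^{-1}x$ is $\frac{z^{-m-1}}{m!}\sum_{n=n_0}^{m}x_n\,n!\,z^n$.
\item Fix $c<1$ and take $m\le c/|z|$. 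For $n_0\le n<m$ the ratios $(n+1)|z|$ are at most $c$, and the first one is $O(|z|)$. So the sum equals $x_{n_0}n_0!z^{n_0}(1+O(|z|))$.
\item Choosing $m=\lfloor c/|z|\rfloor$ and using Stirling gives $\ln\|(z-S)^{-1}x\|\ge c(1-\ln c)/|z|-O(\ln(1/|z|))$.
\item Since $c(1-\ln c)\to1$ as $c\to1^-$, and $(z-sS)^{-1}=s^{-1}(z/s-S)^{-1}$, you get $\ln\|(z-sS)^{-1}x\|=(s/|z|)(1+o(1))$ for every $x\neq0$ and every $s>0$.
\end{itemize}

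With $T=\bigoplus_n s_nS$, plus a zero summand if $0\in\sigma$, your Step~2 goes through essentially verbatim. The bounds become $\ln\|(z-T)^{-1}\|=|z|^{-1}(1+o(1))$ and $\|(z-T)^{-1}x\|\le|z|^{-1}e^{\bar s/|z|}\|x\|$ with $\bar s=\sup\{s_n:x_n\neq0\}$. Together with the blockwise lower bounds, this gives $k_x=\bar s$.
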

The question of whether $1 \in \Lambda(T)$ always holds was also raised in~\cite{Ji-Zhang}. In Section~\ref{sec:one} of the present paper we give an affirmative answer.
In~\cite{Hu-Ji}, the power set of certain quasinilpotent weighted shifts on $\ell^p$ is studied. In particular, for backward unilateral weighted shifts under strong restrictions on the weights it is shown that $\Lambda(T) = [0,1]$. In Section~\ref{sec:shift} of the present paper, the conditions on the weights are substantially weakened, in a certain sense, while the equality $\Lambda(T) = [0,1]$ is retained.
\section{$1 \in \Lambda(T)$ for a quasinilpotent operator}\label{sec:one}
\begin{theorem}\label{thm:one}
Let $X$ be a Banach space and let $T \in \mathcal{L}(X)$ satisfy $\sigma(T) = \{0\}$. Then there exists $x \in X \setminus \{0\}$ such that $k_x = 1$. In particular, $1 \in \Lambda(T)$.
\end{theorem}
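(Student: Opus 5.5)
The plan is a Baire category argument: we show that the set of $x$ with $k_x\ge 1$ is a dense $G_\delta$ subset of $X$. Combined with the standard bound $k_x\le 1$, this gives many vectors with $k_x=1$.

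First, some preliminary remarks. Since $\sigma(T)=\{0\}$ is nonempty, $X\neq\{0\}$. For $z\neq 0$ we have $\|R(z)\|\ge 1/\operatorname{dist}(z,\sigma(T))=1/|z|$, so $\|R(z)\|\to\infty$ as $z\to 0$. In particular $\ln\|R(z)\|>0$ for small $|z|$, and the quotient defining $k_x$ makes sense there. Also $\|R(z)x\|\le\|R(z)\|\,\|x\|$, which gives $k_x\le 1$ because $\ln\|x\|/\ln\|R(z)\|\to 0$. Fix a sequence $z_n\to 0$ with $\|R(z_n)\|>1$ for all $n$. For $m\ge 1$ put
\[
G_m:=\bigcup_{n\ge m}\bigl\{x\in X:\ \|R(z_n)x\|>\|R(z_n)\|^{1-1/m}\bigr\}.
\]
Each set in the union is open, since $x\mapsto\|R(z_n)x\|$ is continuous. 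Hence $G_m$ is open. If $x\in\bigcap_m G_m$, then for every $m$ there is some $n\ge m$ with
\[
\frac{\ln\|R(z_n)x\|}{\ln\|R(z_n)\|}>1-\frac1m .
\]
Since $z_n\to 0$, this gives $k_x\ge 1$, and hence $k_x=1$.

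The main step is density of each $G_m$. Fix $x\in X$ and $\varepsilon\in(0,1)$. For each $n$, choose a unit vector $u_n$ with $\|R(z_n)u_n\|\ge\frac12\|R(z_n)\|$. There are two cases.
\begin{itemize}
\item If $\|R(z_n)x\|\ge\frac{\varepsilon}{4}\|R(z_n)\|$, then $x$ itself satisfies $\|R(z_n)x\|\ge\frac{\varepsilon}{4}\|R(z_n)\|$.
\item Otherwise, the perturbation $y=x+\varepsilon u_n$ satisfies $\|y-x\|=\varepsilon$ and
\[
\|R(z_n)y\|\ge\varepsilon\|R(z_n)u_n\|-\|R(z_n)x\|\ge\frac{\varepsilon}{2}\|R(z_n)\|-\frac{\varepsilon}{4}\|R(z_n)\|=\frac{\varepsilon}{4}\|R(z_n)\|.
\]
\end{itemize}
In both cases some vector within distance $\varepsilon$ of $x$ has $\|R(z_n)\cdot\|\ge\frac{\varepsilon}{4}\|R(z_n)\|$. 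Since $\|R(z_n)\|\to\infty$, we have $\frac{\varepsilon}{4}\|R(z_n)\|>\|R(z_n)\|^{1-1/m}$ for all sufficiently large $n$. Choosing such an $n\ge m$ puts that vector in $G_m$, so $G_m$ is dense.

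By the Baire category theorem, $\bigcap_m G_m$ is a dense $G_\delta$ subset of the complete space $X$. It therefore cannot be contained in $\{0\}$, because $X\neq\{0\}$ and $\{0\}$ is nowhere dense. Any nonzero $x\in\bigcap_m G_m$ satisfies $k_x=1$.

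I expect the only delicate point to be the density step. It is handled by the dichotomy above, which replaces a uniform-boundedness argument by a single well-chosen perturbation. No further obstacle is anticipated.
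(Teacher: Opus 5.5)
Your proof is correct and is essentially the paper's Baire category argument, run directly with the open dense sets $G_m$ (along a fixed sequence $z_n \to 0$) instead of by contradiction with the closed sets $E_n$ defined over punctured discs. Your density step is the same perturbation idea the paper uses to show that a ball inside $E_{n_0}$ would force $\|R(\lambda)\|\le (2/r)^{n_0}$; your formulation additionally makes explicit that the set of $x$ with $k_x=1$ is residual.
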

\begin{proof}
Suppose, for contradiction, that $k_x < 1$ for every $x \neq 0$.
For each integer $n \geq 2$, define
\[
E_{n}
:=
\left\{
x \in X \; \middle| \;
\|R(\lambda)x\|
\le
\|R(\lambda)\|^{1-\frac1n}
\ \text{for all } \lambda \text{ with }
0 < |\lambda| < \frac1n
\right\}.
\]
1) The set $E_{n}$ is closed. Indeed,
for a fixed $\lambda$ the map
\[
x \mapsto \|R(\lambda)x\|
\]
is continuous, since $R(\lambda) \in \mathcal{L}(X)$.
The quantity $\|R(\lambda)\|^{1-1/n}$ is a constant.
Hence the set
\[
\{x : \|R(\lambda)x\| \le \|R(\lambda)\|^{1-1/n}\}
\]
is closed for each $\lambda$. Consequently,
$E_{n}$ is an intersection of closed sets
and is therefore closed.
2) We show that, under our hypothesis,
\[
X = \bigcup_{n \ge 2} E_{n}.
\]
Let $x \neq 0$.
By assumption $k_x < 1$, so there exists $c$ such that
\[
\limsup_{\lambda \to 0}
\frac{\ln \|R(\lambda)x\|}{\ln \|R(\lambda)\|}
< c < 1.
\]
This means that there exists $\delta > 0$ such that
for all $0 < |\lambda| < \delta$
\[
\frac{\ln \|R(\lambda)x\|}{\ln \|R(\lambda)\|}
\le c.
\]
Note that for $|\lambda| < 1$ one has
$\|R(\lambda)\| \ge 1/|\lambda| > 1$, and therefore
$\ln \|R(\lambda)\| > 0$;
the previous inequality then yields
$\|R(\lambda)x\| \le \|R(\lambda)\|^c$.
Choose $n$ large enough so that simultaneously
\[
1-\frac1n \ge c
\quad \text{and} \quad
\frac1n < \delta.
\]
This is possible for $n$ sufficiently large.
Then for $0 < |\lambda| < 1/n$ we have $|\lambda| < \delta$
and $\|R(\lambda)\| > 1$,
so that
\[
\|R(\lambda)x\|
\le
\|R(\lambda)\|^c
\le
\|R(\lambda)\|^{1-\frac1n},
\]
that is, $x \in E_{n}$. Trivially, $0 \in E_{n}$ as well.
3) Since $X$ is a Banach space, by the Baire category theorem $X$ cannot be a countable union of nowhere dense closed sets. Since
\[
X = \bigcup_{n \ge 1} E_{n}
\]
and every $E_{n}$ is closed,
some $E_{n_0}$
has nonempty interior.
Consequently, there exist
$x^* \in X$, $r > 0$, and $n_0 \ge 1$ such that
\[
B(x^*,r) \subset E_{n_0}.
\]
This means that for every $y$ with
$\|y-x^*\| \le r$
and every $0 < |\lambda| < 1/n_0$,
\[
\|R(\lambda)y\|
\le
\|R(\lambda)\|^{1-\frac1{n_0}}.
\tag{1}
\]
Let $0 < |\lambda| < 1/n_0$.
For any $z \in X$ with $\|z\| \le 1$,
the vectors
\[
x^* + rz
\quad \text{and} \quad
x^*
\]
both lie in $B(x^*,r)$,
so by (1)
\[
\|R(\lambda)(x^* + rz)\|
\le
\|R(\lambda)\|^{1-\frac1{n_0}},\,\,\,
\|R(\lambda)x^*\|
\le
\|R(\lambda)\|^{1-\frac1{n_0}}.
\]
By linearity of $R(\lambda)$,
\[
r\|R(\lambda)z\|
=
\|R(\lambda)(rz)\|
\le
\|R(\lambda)(x^* + rz)\|
+
\|R(\lambda)x^*\|
\le
2\|R(\lambda)\|^{1-\frac1{n_0}}.
\]
Taking the supremum over $\|z\| \le 1$,
\[
\|R(\lambda)\|
\le
\frac{2}{r}\|R(\lambda)\|^{1-\frac1{n_0}}.
\]
Hence
\[
\|R(\lambda)\|^{1/n_0}
\le
\frac{2}{r},
\]
so that
\[
\|R(\lambda)\|
\le
\left(\frac{2}{r}\right)^{n_0}
\]
for all $0 < |\lambda| < 1/n_0$.
Thus $\|R(\lambda)\|$
is bounded on a punctured neighborhood of $0$,
which is a contradiction.
\end{proof}
\begin{remark}
The question of whether $1$ belongs to $\Lambda(T)$ was posed in~\cite{Ji-Zhang} in the Hilbert space setting; Theorem~\ref{thm:one} answers it in an arbitrary Banach space.
\end{remark}
\section{Backward weighted shift on $\ell^p$}\label{sec:shift}
Throughout this section $\mathbb{N} = \{0, 1, 2, \dots\}$, the vectors $\{e_n\}_{n \geq 0}$ form the canonical basis of $\ell^p(\mathbb{N})$, and $f_m$ denotes the $m$-th coordinate functional on $\ell^p(\mathbb{N})$. Given a sequence $\{w_n\}_{n = 1}^{\infty}$ of positive numbers, the \emph{backward unilateral weighted shift} $T$ on $\ell^p(\mathbb{N})$ with weight sequence $\{w_n\}$ is the operator determined by
\[
T e_0 := 0, \qquad T e_n := w_n e_{n - 1} \quad (n \geq 1).
\]
The following result is proved in~\cite{Hu-Ji}: if $T$ is a backward unilateral weighted shift on $\ell^p(\mathbb{N})$ with weight sequence $\{w_n\}_{n=1}^{\infty}$ satisfying $\frac{1}{n + 2 m_0} \leq w_{n + m_0} \leq \frac{1}{n}$ for $n \geq 1$ and some $m_0 \geq 1$, then $\Lambda(T) = [0,1]$.
The proof in~\cite{Hu-Ji} relies on two technical ingredients: (i)~the special structure of the test vector $x_r = \{a_m\}_{m \geq 0}$, whose coordinates satisfy the coordinatewise majorization $|f_m(T^k x_r)| \leq a_m \cdot f_0(T^k x_r)$; (ii)~the identity $\|T^k\| = \prod_{j=1}^{k} w_j$, which allows one to match the numerator estimate with the asymptotics of $\ln\|(z - T)^{-1}\|$. In the present paper we construct a test vector of a different form, based on the convexity of the function $\Phi(k) = -\ln\prod_{j=1}^{k} w_j$, and obtain the same result under the condition of $p'$-summability of the weights.
\begin{theorem}\label{thm:main}
Let $1 \leq p < \infty$, and let $T$ be a backward unilateral weighted shift on $\ell^p(\mathbb{N})$ with strictly decreasing weight sequence $\{w_n\}_{n=1}^{\infty}$ satisfying $\sum_{n=1}^{\infty} w_n^{p'} < \infty$ for some $p' > 0$. Then $\Lambda(T) = [0, 1]$.
\end{theorem}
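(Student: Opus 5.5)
Since $\Lambda(T)\subseteq[0,1]$ always, it suffices to show $[0,1]\subseteq\Lambda(T)$. The plan is to reduce this to showing that $\Lambda(T)$ is dense in $[0,1]$, and then to build test vectors whose exponent can be tuned. The reduction uses the right-closedness theorem of \cite{Ji-Zhang}. If $\Lambda(T)$ is dense in $[0,1]$ and $0\in\Lambda(T)$, then for each $t\in(0,1]$ the set $Y=\Lambda(T)\cap[0,t]$ is nonempty, bounded and has $\sup Y=t$, so $t\in\Lambda(T)$.

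\textbf{Preliminaries.} Put $W_k=\prod_{j=1}^k w_j$, $\Phi(k)=-\ln W_k$ and $L=\ln(1/|z|)$.
\begin{itemize}
\item Since $w_n$ decreases, $\|T^k\|=\sup_n w_{n+1}\cdots w_{n+k}=W_k$.
\item The increments $\Phi(k)-\Phi(k-1)=-\ln w_k$ increase, so $\Phi$ is convex.
\item Monotonicity and $p'$-summability give $nw_n^{p'}\le\sum_j w_j^{p'}$. Hence $w_n\le Cn^{-1/p'}$ and $\Phi(k)\ge \frac{k}{p'}\ln k-Ck$. In particular $\Phi$ is superlinear and $T$ is quasinilpotent.
\end{itemize}
From the Neumann series $R(z)=\sum_k T^k z^{-k-1}$ I will prove
\[
\ln\|R(z)\|=\Phi^*(L)+o(\Phi^*(L)),\qquad \Phi^*(L):=\sup_k\bigl(kL-\Phi(k)\bigr).
\]
The lower bound comes from testing on a suitable $e_N$. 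For the upper bound, the growth of $\Phi$ forces the terms with $k$ far from the maximizing index $k(L)$ to be geometrically small. The number of remaining terms is then at most polynomial in $k(L)$, and $\ln k(L)=o(\Phi^*(L))$. Since $\Phi^*(L)/L\to\infty$ and $R(z)e_0=e_0/z$, we get $k_{e_0}=0$. More generally $k_{e_N}=0$, because $\|R(z)e_N\|\le C_N|z|^{-N-1}$.

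\textbf{Tuned vectors.} Fix $\alpha\in(0,1)$. I look for a lacunary vector
\[
x=\sum_j c_j e_{N_j}.
\]
The indices are $N_j=k(L_j)$ for a rapidly increasing sequence $L_j\to\infty$. The coefficients are roughly $c_j=\exp\bigl(-(1-\alpha)\Phi^*(L_j)\bigr)$, which are summable because $\Phi^*(L_j)\to\infty$.

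\emph{Lower bound.} We have
\[
f_0\bigl(R(z)e_{N}\bigr)=W_{N}z^{-N-1},\qquad \Phi^*(L)=N L-\Phi(N)\ \text{ when } N=k(L).
\]
Hence at $|z|=e^{-L_j}$ the $j$-th term contributes about $\exp\bigl(\alpha\Phi^*(L_j)\bigr)$ to the $0$-th coordinate of $R(z)x$. The terms with $i<j$ are negligible there: $\|R(z)e_{N_i}\|\lesssim |z|^{-N_i-1}$ and $N_iL_j=o(\Phi^*(L_j))$ once $L_j$ is chosen large relative to $N_i$. The terms with $i>j$ are negligible by the size of $c_i$. This gives $k_x\ge\alpha$.

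\emph{Upper bound (main obstacle).} I must show that for \emph{all} small $z$, not only at the radii $e^{-L_j}$,
\[
\max_j c_j\|R(z)e_{N_j}\|\le \|R(z)\|^{\alpha+\varepsilon}.
\]
Here I use convexity of $\Phi$. The quantity $\|R(z)e_N\|$ is comparable to
\[
\sup_{k\le N} \frac{W_N}{W_{N-k}}\,|z|^{-k}=\exp\Bigl(\sup_{k\le N}\bigl(kL-(\Phi(N)-\Phi(N-k))\bigr)\Bigr).
\]
By convexity, $\Phi(N)-\Phi(N-k)\ge\Phi(k)$, so this is at most $\exp(\Phi^*(L))$. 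It also saturates at $\exp\bigl(NL-\Phi(N)\bigr)$ once $L\ge L_j$. So the task reduces to the scalar inequality
\[
\min\bigl(\Phi^*(L),\,N_jL-\Phi(N_j)\bigr)-(1-\alpha)\Phi^*(L_j)\le(\alpha+\varepsilon)\Phi^*(L).
\]
For $L\le L_j$ this follows from $\Phi^*(L)\le\Phi^*(L_j)$. The case $L\ge L_j$ is the genuinely delicate one. There I would use that $N_jL-\Phi(N_j)$ is a supporting line of the convex function $\Phi^*$ at $L_j$, so it grows only linearly while $\Phi^*$ grows superlinearly. If the $\varepsilon$-loss cannot be controlled uniformly, I would instead aim only for $k_x\in[\alpha,\alpha+\varepsilon]$. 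That still gives density of $\Lambda(T)$, and right-closedness then finishes the proof as described above.
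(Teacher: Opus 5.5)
Your reduction through right-closedness is legitimate, and your lower bound is fine. It is even easier than you make it: for real $z=t>0$ every contribution to $f_0(R(t)x)$ is positive, so no negligibility argument is needed. The gap is the upper bound for $L\ge L_j$, and it is not just a missing estimate: with $c_j=e^{-(1-\alpha)\Phi^*(L_j)}$ the vector has the wrong exponent. At $L_j$ the line $N_jL-\Phi(N_j)-(1-\alpha)\Phi^*(L_j)$ touches $\alpha\Phi^*$, but its slope is $N_j$, while the slope of $\alpha\Phi^*$ there is only $\alpha N_j$. Superlinearity of $\Phi^*$ takes over only once the slope of $\Phi^*$ has grown to about $N_j/\alpha$, and by then the excess can be a fixed fraction of $\Phi^*(L)$.

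Concretely, let $w_n=1/n$, which is admissible with $p'=2$. Then $\Phi(k)=\ln k!$, $\Phi^*(L)=e^L(1+o(1))$, $N_j=\lfloor e^{L_j}\rfloor$ and $\Phi^*(L_j)=N_j(1+o(1))$. For $t>0$ with $\ln(1/t)=\ln N_j+u$, positivity gives
\[
\ln\|R(t)x\|\ge \ln c_j+N_j\ln(1/t)-\ln N_j! = N_j(\alpha+u)(1+o(1)),
\qquad
\ln\|R(t)\|=N_je^{u}(1+o(1)).
\]
Taking $u=1-\alpha$ yields $k_x\ge e^{\alpha-1}>\alpha$. So your scalar inequality is false for small $\varepsilon$. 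The fallback fails too: here every vector of your form has $k_x>e^{-1}$, so you never obtain density. (For $w_n=\exp(-e^{e^n})$, evaluating at $\ln(1/t)=e^{e^{N_j+2}}$ even gives $k_x=1$ for every $\alpha$.)

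The missing idea is the correct calibration of the coefficients. To keep the $j$-th term below $\|R(z)\|^{\alpha+o(1)}$ at every radius, you need $-\ln c_j\ge\sup_L\bigl(N_jL-\Phi(N_j)-\alpha\Phi^*(L)\bigr)$. This supremum is attained where the slope of $\Phi^*$ equals $N_j/\alpha$, not at $L_j$. By Legendre duality it equals $\alpha\Phi(N_j/\alpha)-\Phi(N_j)$, with $\Phi$ interpolated linearly. That is, $c_j$ should be the paper's $a_{N_j}=e^{\Phi(N_j)-\Phi_\tau(N_j)}$ with $\tau=\alpha$. For $w_n=1/n$ this is roughly $\alpha^{N_j}$ instead of your $e^{-(1-\alpha)N_j}$.

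The paper then keeps all coordinates, $x_\tau=\sum_k a_ke_k$. Superadditivity of $\Phi_\tau$ gives $|f_m(T^kx_\tau)|\le a_m f_0(T^kx_\tau)$, hence $\|R(z)x_\tau\|\le\|x_\tau\|\,f_0(R(|z|)x_\tau)$ for all $z$. The Laplace-type lemma then evaluates $f_0$, giving $k_{x_\tau}=\tau$ exactly, without needing right-closedness.

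A smaller point: in your preliminaries, the number of non-negligible terms past $k(L)$ is bounded by $e^{O(L)}$, as in the paper's lemma, not evidently by a polynomial in $k(L)$. This still suffices, since $L=o(\Phi^*(L))$.
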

Before the proof, we state a technical lemma which will be applied twice---to the asymptotics of the numerator and of the denominator in the definition of $k_x$.
\begin{lemma}\label{lem:laplace}
Let $\{\psi(k)\}_{k \geq 0}$ satisfy $\psi(0) = 0$, suppose the difference sequence $\tilde\phi_k := \psi(k) - \psi(k - 1)$ is strictly increasing with $\tilde\phi_k \to +\infty$, and assume that
\begin{equation}\label{eq:phi-log}
\tilde\phi_k \geq \tilde c \ln k, \qquad k \geq k_0,
\end{equation}
for some $\tilde c > 0$ and $k_0 \in \mathbb{N}$. Set
\[
S(\mu) := \sum_{k \geq 0} e^{k\mu - \psi(k)}, \qquad M(\mu) := \max_{k \geq 0}\bigl(k\mu - \psi(k)\bigr).
\]
Then $\ln S(\mu) = M(\mu)(1 + o(1))$ as $\mu \to +\infty$.
\end{lemma}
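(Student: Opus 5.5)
Since every term of $S(\mu)$ is positive and the maximal term equals $e^{M(\mu)}$, we have $\ln S(\mu)\ge M(\mu)$ trivially. So the task is the upper bound
\[
\ln S(\mu)\le M(\mu)+o(M(\mu)),
\]
together with the observation that $M(\mu)\to+\infty$.

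\textbf{Growth of $M(\mu)$.} Let $k^*=k^*(\mu)$ be the largest index with $\tilde\phi_{k^*}<\mu$. This is well defined for large $\mu$ because $\tilde\phi_k$ is strictly increasing to $+\infty$. Since $\psi$ is convex, the function $k\mapsto k\mu-\psi(k)$ increases while $\tilde\phi_k<\mu$ and decreases afterwards, so its maximum is attained at $k^*$. Writing
\[
M(\mu)=\sum_{j\le k^*}(\mu-\tilde\phi_j),
\]
each summand is positive, and $k^*\to\infty$. In particular, taking the index $j=1$ (or the comparison $M(\mu)\ge \mu-\psi(1)$) gives $M(\mu)\ge \mu-\tilde\phi_1$. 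Hence $M(\mu)\to\infty$ and, more importantly, $\mu=O(M(\mu))$.

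\textbf{Splitting the sum.} Fix $K=K(\mu)$ and split $S(\mu)$ into indices $k\le K$ and $k>K$.
\begin{itemize}
\item The indices $k\le K$ contribute at most $(K+1)e^{M(\mu)}$.
\item For $k>K\ge k^*$ we use convexity and telescope:
\[
k\mu-\psi(k)\le M(\mu)-\sum_{j=k^*+1}^{k}(\tilde\phi_j-\mu).
\]
\end{itemize}
For the tail I choose $K$ to be the first index with $\tilde\phi_K\ge \mu+2$, say. Then for $k>K$ every term $\tilde\phi_j-\mu$ with $j>K$ is at least $2$, so the tail is bounded by $e^{M(\mu)}\sum_{m\ge1}e^{-2m}=O(e^{M(\mu)})$. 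Thus
\[
\ln S(\mu)\le M(\mu)+\ln(K+2)+O(1),
\]
and it remains to show $\ln K(\mu)=o(M(\mu))$.

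\textbf{Main obstacle: bounding $\ln K$.} This is exactly where hypothesis \eqref{eq:phi-log} enters. Since $\tilde\phi_{K-1}<\mu+2$ and $\tilde\phi_{K-1}\ge\tilde c\ln(K-1)$ for large $K$, we get
\[
\ln K\le (\mu+2)/\tilde c+O(1).
\]
That bound is only $O(\mu)=O(M)$, which is not $o(M)$, so I need a sharper comparison between $\mu$ and $M(\mu)$.

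My plan is to show $\mu=o(M(\mu))$:
\begin{itemize}
\item Fix any $A>0$. For large $\mu$, at least $N$ indices $j\le k^*$ satisfy $\tilde\phi_j<\mu/2$, where $N\to\infty$ as $\mu\to\infty$ because $k^*\to\infty$ and the $\tilde\phi_j$ for fixed $j$ are bounded.
\item Each such index contributes at least $\mu/2$ to $M(\mu)$, so $M(\mu)\ge N\mu/2\ge A\mu$ eventually.
\end{itemize}
Hence $\mu/M(\mu)\to0$, so $\ln K=O(\mu)=o(M(\mu))$, and therefore $\ln S(\mu)=M(\mu)(1+o(1))$.

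The role of the logarithmic lower bound is precisely to keep the count $K$ of near-maximal terms from being super-exponential in $\mu$. I expect the delicate bookkeeping to be in this last comparison and in handling small $k$ (for instance $k_0$, and the possibility that $\tilde\phi_k<0$ initially). Those only affect constants.
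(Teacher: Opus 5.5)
Your proof is correct and follows essentially the same route as the paper. Both arguments locate the maximizer $k^*$ via concavity and bound each term up to the first index where $\tilde\phi_k$ exceeds $\mu$ by a fixed margin by $e^{M(\mu)}$ (your $K$ with threshold $\mu+2$ is the paper's $k^*+j_0$ with threshold $\mu+1$). Both then control the tail by a geometric series, use \eqref{eq:phi-log} to get $\ln K=O(\mu)$, and finish with $M(\mu)/\mu\to\infty$; your bound $M(\mu)\ge\sum_{j\le N}(\mu-\tilde\phi_j)$ is exactly the paper's $M(\mu)\ge N\mu-\psi(N)$.
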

\begin{proof}
Set $h(k) := k\mu - \psi(k)$. The differences $h(k+1) - h(k) = \mu - \tilde\phi_{k+1}$ are strictly decreasing in $k$; hence $h$ is strictly concave on $\mathbb{Z}_{\geq 0}$ and has a unique maximizer $k^* = k^*(\mu)$, characterized by
\begin{equation}\label{eq:kstar}
\tilde\phi_{k^*} \leq \mu < \tilde\phi_{k^* + 1}.
\end{equation}
From~\eqref{eq:phi-log} and~\eqref{eq:kstar} we infer that $\ln k^* \leq \mu / \tilde c$ and that $k^* \to \infty$ as $\mu \to \infty$.
The lower estimate is trivial: $\ln S(\mu) \geq h(k^*) = M(\mu)$. For the upper estimate, write
\[
R(\mu) := \sum_{k \geq 0} e^{h(k) - h(k^*)}, \qquad S(\mu) = e^{M(\mu)} R(\mu),
\]
and estimate $\ln R(\mu)$. Since $k^*$ maximizes $h$, the differences $h(k) - h(k^*)$ are nonpositive; in particular the summands with $0 \leq k \leq k^*$ are each at most $1$, and there are $k^* + 1$ of them. For $j \geq 1$, telescoping gives
\[
h(k^*) - h(k^* + j) = \sum_{i = 1}^{j}(\tilde\phi_{k^* + i} - \mu),
\]
where the summands are nonnegative by~\eqref{eq:kstar} and increasing in $i$. Set
\[
j_0 := \min\{j \geq 1 : \tilde\phi_{k^* + j} - \mu \geq 1\};
\]
the existence of $j_0$ is guaranteed by $\tilde\phi_k \to \infty$, and~\eqref{eq:phi-log} yields $j_0 \leq e^{(\mu + 1)/\tilde c}$. For $j \geq j_0$ we have $h(k^*) - h(k^* + j) \geq j - j_0 + 1$, so that
\[
\sum_{j \geq j_0} e^{h(k^* + j) - h(k^*)} \leq \sum_{j \geq j_0} e^{-(j - j_0 + 1)} = \frac{1}{e - 1}.
\]
The summands with $1 \leq j < j_0$ are each at most $1$, and their number is less than $j_0$. Consequently,
\[
R(\mu) \leq k^* + j_0 + \frac{e}{e - 1}, \qquad \ln R(\mu) \leq \frac{\mu + 1}{\tilde c} + O(1) = O(\mu).
\]
For any fixed $N \in \mathbb{N}$, we have $M(\mu) \geq N\mu - \psi(N)$, whence $M(\mu)/\mu \to +\infty$. Therefore $\ln R(\mu) = o(M(\mu))$, and
\[
\ln S(\mu) = M(\mu) + \ln R(\mu) = M(\mu)(1 + o(1)). \qedhere
\]
\end{proof}
\begin{proof}[Proof of Theorem~\ref{thm:main}]
Set
\[
W_0 := 1,\quad W_k := \prod_{j = 1}^{k} w_j,\quad \phi_j := -\ln w_j,\quad \Phi(k) := -\ln W_k = \sum_{j = 1}^{k} \phi_j.
\]
The action of $T^k$ on the canonical basis is given by $T^k e_n = (W_n / W_{n - k}) e_{n - k}$ for $n \geq k$ and $T^k e_n = 0$ for $n < k$. Since $\{w_n\}$ is decreasing, $\sup_{n \geq k} W_n / W_{n - k} = W_k$, and consequently $\|T^k\| = W_k$, while $\phi_j$ is strictly increasing with $\phi_j \to +\infty$. Extend $\Phi$ to $[0, +\infty)$ piecewise linearly; since the slopes $\phi_{k+1}$ are strictly increasing, the resulting function is convex, and its right derivative at an integer point $k$ equals $\phi_{k + 1}$.
Since $\Lambda(T) \subseteq [0, 1]$ and $1 \in \Lambda(T)$ by Theorem~\ref{thm:one}, it suffices to show that $0 \in \Lambda(T)$ and that, for every $\tau \in (0, 1)$, there exists $x_\tau \in \ell^p$ with $k_{x_\tau} = \tau$.
The inclusion $0 \in \Lambda(T)$ is elementary: $(z - T)^{-1} e_m = \sum_{j = 0}^{m} T^j e_m / z^{j + 1}$ is a polynomial in $1/z$ of degree $m + 1$, so $\ln\|(z - T)^{-1} e_m\|_p = O(\ln(1/|z|))$, whereas $\ln\|(z - T)^{-1}\|$ grows faster than any linear function of $\ln(1/|z|)$ (see~\eqref{eq:resolvent-norm} below). Hence $k_{e_m} = 0$.
Fix $\tau \in (0, 1)$ and set
\begin{equation}\label{eq:xtau}
\Phi_\tau(k) := \tau\, \Phi(k/\tau), \qquad x_\tau := \{a_k\}_{k \geq 0},\quad a_k := e^{\Phi(k) - \Phi_\tau(k)}.
\end{equation}
By convexity of $\Phi$ together with $\Phi(0) = 0$, $\Phi_\tau(k) \geq \Phi(k)$, so $0 < a_k \leq 1$ and
\begin{equation}\label{eq:akWk}
a_k W_k = e^{-\Phi_\tau(k)}.
\end{equation}
\smallskip
\emph{$x_\tau \in \ell^p$.} By convexity of $\Phi$, for $t \geq 0$ we have $\Phi(t) \geq \Phi(k) + \phi_{k + 1}(t - k)$. Substituting $t = k/\tau$ and multiplying by $\tau$ yields
\begin{equation}\label{eq:phitau-phi}
\Phi_\tau(k) - \Phi(k) \geq (1 - \tau)\bigl(k \phi_{k + 1} - \Phi(k)\bigr) = (1 - \tau)\sum_{j = 1}^{k}(\phi_{k + 1} - \phi_j).
\end{equation}
Convergence of $\sum w_n^{p'}$ together with the monotonicity of $\phi_n$ implies $n e^{-p' \phi_n} \to 0$, whence
\begin{equation}\label{eq:phi-bound}
\phi_n \geq c \ln n \qquad (n \geq k_1)
\end{equation}
for some $c > 0$ and $k_1 \in \mathbb{N}$. Retaining only the first $k_1$ terms in~\eqref{eq:phitau-phi} and applying~\eqref{eq:phi-bound},
\[
\Phi_\tau(k) - \Phi(k) \geq (1 - \tau)(k_1 c \ln k - S_0), \qquad S_0 := \sum_{j = 1}^{k_1} \phi_j,
\]
so that $a_k^p \leq e^{p(1 - \tau) S_0}\, k^{-p(1 - \tau) k_1 c}$. By increasing $k_1$, the exponent $p(1 - \tau) k_1 c$ can be made arbitrarily large; in particular, if $p(1 - \tau) k_1 c \geq 2$, then $\sum a_k^p < \infty$.
\smallskip
\emph{Coordinatewise majorization.} We show that for all $m, k \geq 0$,
\begin{equation}\label{eq:coord-major}
|f_m(T^k x_\tau)| \leq a_m \cdot f_0(T^k x_\tau).
\end{equation}
The operator $T^k$ sends the coordinate with index $n$ to position $n - k$ with multiplier $W_n / W_{n - k}$, so
\[
f_m(T^k x_\tau) = a_{m + k} \cdot \frac{W_{m + k}}{W_m}, \qquad f_0(T^k x_\tau) = a_k \cdot W_k,
\]
and applying~\eqref{eq:akWk},
\begin{equation}\label{eq:ratio}
\frac{f_m(T^k x_\tau)}{f_0(T^k x_\tau)} = \frac{a_{m + k} W_{m + k}}{a_k W_k W_m} = \frac{e^{-\Phi_\tau(m + k)}}{e^{-\Phi_\tau(k)} \cdot W_m} = \frac{e^{\Phi_\tau(k) - \Phi_\tau(m + k)}}{W_m}.
\end{equation}
The function $\Phi_\tau$ is convex with $\Phi_\tau(0) = 0$. Applying convexity at the points $0$ and $m + k$ with weights $\tfrac{k}{m + k}$ and $\tfrac{m}{m + k}$, we obtain
\[
\Phi_\tau(m) \leq \frac{m}{m + k} \Phi_\tau(m + k), \qquad \Phi_\tau(k) \leq \frac{k}{m + k} \Phi_\tau(m + k),
\]
and adding these inequalities gives $\Phi_\tau(m + k) \geq \Phi_\tau(m) + \Phi_\tau(k)$, whence $\Phi_\tau(k) - \Phi_\tau(m + k) \leq -\Phi_\tau(m)$. Substituting into~\eqref{eq:ratio},
\[
\frac{f_m(T^k x_\tau)}{f_0(T^k x_\tau)} \leq \frac{e^{-\Phi_\tau(m)}}{W_m} = e^{\Phi(m) - \Phi_\tau(m)} = a_m,
\]
which is~\eqref{eq:coord-major}. From~\eqref{eq:coord-major} and the triangle inequality, for $z \neq 0$,
\[
\bigl|f_m\bigl((z - T)^{-1} x_\tau\bigr)\bigr| = \left|\sum_{k \geq 0} \frac{f_m(T^k x_\tau)}{z^{k + 1}}\right| \leq a_m \sum_{k \geq 0} \frac{f_0(T^k x_\tau)}{|z|^{k + 1}} = a_m \cdot f_0\bigl((|z| - T)^{-1} x_\tau\bigr),
\]
where the last equality uses the positivity $f_0(T^k x_\tau) = e^{-\Phi_\tau(k)} > 0$. Raising to the $p$-th power, summing over $m$, and taking the $p$-th root,
\begin{equation}\label{eq:upper-res}
\|(z - T)^{-1} x_\tau\|_p \leq \|x_\tau\|_p \cdot f_0\bigl((|z| - T)^{-1} x_\tau\bigr).
\end{equation}
All terms in $f_0\bigl((t - T)^{-1} x_\tau\bigr) = \sum a_k W_k / t^{k + 1}$ are positive for $t > 0$, hence
\begin{equation}\label{eq:lower-res}
\|(t - T)^{-1} x_\tau\|_p \geq f_0\bigl((t - T)^{-1} x_\tau\bigr) > 0.
\end{equation}
\smallskip
\emph{Asymptotics.} Set $\mu := \ln(1/|z|)$ and $M(\mu) := \max_{k \geq 0}(k\mu - \Phi(k))$. We show that
\begin{equation}\label{eq:resolvent-norm}
\ln\|(z - T)^{-1}\| = M(\mu)(1 + o(1)).
\end{equation}
From the Neumann series, for $t > 0$,
\[
\|(t - T)^{-1}\| \leq \frac{1}{t} \sum_{k \geq 0} \frac{W_k}{t^k} = \frac{1}{t} \sum_{k \geq 0} e^{k\mu - \Phi(k)},
\]
Applying Lemma~\ref{lem:laplace} with $\psi = \Phi$, whose hypotheses hold because $\{\phi_k\}$ is strictly increasing and tends to $+\infty$ while~\eqref{eq:phi-log} follows from~\eqref{eq:phi-bound}, we obtain
\[
\ln\|(z - T)^{-1}\| \leq M(\mu)(1 + o(1)) + \mu.
\]
Conversely, for any $N \geq 0$, in the expansion $(z - T)^{-1} e_N = \sum_{k \leq N} T^k e_N / z^{k + 1}$ only the term with $k = N$ has a nonzero zeroth coordinate, so
\[
\|(z - T)^{-1}\| \geq \bigl|f_0\bigl((z - T)^{-1} e_N\bigr)\bigr| = \frac{W_N}{|z|^{N + 1}},
\]
and maximizing over $N$ gives $\ln\|(z - T)^{-1}\| \geq M(\mu)$. Since $M(\mu)/\mu \to +\infty$, we have $\mu = o(M(\mu))$, which yields~\eqref{eq:resolvent-norm}.
For the numerator, from~\eqref{eq:akWk},
\[
f_0\bigl((t - T)^{-1} x_\tau\bigr) = \frac{1}{t} \sum_{k \geq 0} e^{k\mu - \Phi_\tau(k)}, \qquad t > 0.
\]
The hypotheses of Lemma~\ref{lem:laplace} are satisfied for $\Phi_\tau$: the differences $\Phi_\tau(k) - \Phi_\tau(k - 1)$ are the average values of $\Phi'$ on intervals of length $1/\tau$, are monotonically increasing together with $\phi_j$, and satisfy~\eqref{eq:phi-log} with constant $\tilde c = c$. Applying the lemma,
\[
\ln f_0\bigl((t - T)^{-1} x_\tau\bigr) = \max_{k \geq 0}(k\mu - \Phi_\tau(k))(1 + o(1)) + \mu.
\]
The substitution $s = k/\tau$ gives
\[
\max_{k \geq 0}\bigl(k\mu - \tau \Phi(k/\tau)\bigr) = \tau \cdot \max_{s \in (1/\tau)\mathbb{Z}_{\geq 0}}(s\mu - \Phi(s)),
\]
which differs from $\tau M(\mu)$ by $O(\mu) = o(M(\mu))$. Therefore
\begin{equation}\label{eq:numerator}
\ln f_0\bigl((t - T)^{-1} x_\tau\bigr) = \tau M(\mu)(1 + o(1)).
\end{equation}
\smallskip
\emph{Computation of $k_{x_\tau}$.} From~\eqref{eq:lower-res},~\eqref{eq:numerator}, and~\eqref{eq:resolvent-norm},
\[
k_{x_\tau} \geq \limsup_{t \to 0^+}\frac{\ln f_0\bigl((t - T)^{-1} x_\tau\bigr)}{\ln\|(t - T)^{-1}\|} = \lim_{t \to 0^+}\frac{\tau M(\mu)(1 + o(1))}{M(\mu)(1 + o(1))} = \tau.
\]
On the other hand,~\eqref{eq:upper-res} implies
\[
\ln\|(z - T)^{-1} x_\tau\|_p \leq \tau M(\mu)(1 + o(1)) + \ln\|x_\tau\|_p,
\]
and the finite constant $\ln\|x_\tau\|_p$ is $o(M(\mu))$. Dividing by $\ln\|(z - T)^{-1}\| = M(\mu)(1 + o(1))$ yields $k_{x_\tau} \leq \tau$. Hence $k_{x_\tau} = \tau$.
\end{proof}
\begin{remark}
It appears that Theorem~4 remains valid without the monotonicity assumption on the weights. The corresponding generalization, or the construction of a counterexample, deserves a separate investigation.
\end{remark}
\section*{Declaration on the use of generative AI}
Generative AI (Claude Opus, Anthropic) was used substantially in the preparation of this paper. Its contribution was decisive for the formulation and proof of  Lemma~\ref{lem:laplace}, notably, and it assisted in drafting a number of the other arguments, which are standard ones. The tool was also of considerable help in preparing the \LaTeX{} source. All such output was produced under the author's direction and was subsequently revised and verified by the author. The author takes full responsibility for the contents of the paper.


\begin{thebibliography}{9}
\bibitem{DouY21}
R.~G.~Douglas, R.~Yang.
\emph{Hermitian geometry on the resolvent set (II)}.
Sci. China Math. \textbf{64} (2021), 385--398.
\bibitem{Hu-Ji}
C.~Hu, Y.~Ji.
\emph{Power set of some quasinilpotent weighted shifts on $\ell^p$}.
Linear Algebra Appl. \textbf{686} (2024), 111--133.
DOI: 10.1016/j.laa.2024.01.011.
\bibitem{Ji-Zhang}
Y.~Ji, Y.~Zhang.
\emph{On the power set of quasinilpotent operators}.
Integr. Equ. Oper. Theory \textbf{95} (2023), Article~25.
DOI: 10.1007/s00020-023-02745-4.
\end{thebibliography}
\end{document}